\documentclass[12pt,leqno]{article}
\usepackage{amsmath,amssymb,amsbsy,amsfonts,amsthm,latexsym,
         amsopn,amstext,amsxtra,euscript,amscd,amsthm,mathdots,hyperref}

\allowdisplaybreaks

\numberwithin{equation}{section}

\newtheorem{lem}{Lemma}

\newtheorem{prop}{Proposition}

\newtheorem{thm}{Theorem}

\newtheorem{rem}{Remark}

\newcommand{\seqnum}[1]{\href{https://oeis.org/#1}{\rm \underline{#1}}}

\begin{document}

\begin{large}
\centerline{\Large \bf The Integer Values of $\sum_{i=1}^{n}\frac{1}{\gcd(n,i)}$}
\end{large}
\vskip 10pt
\begin{large}
\centerline{\sc  Jean-Marie De Koninck}
\end{large}
\vskip 10pt
\begin{large}
\centerline{\sc  Patrick Letendre}
\end{large}
\vskip 10pt
\begin{abstract}
We study the set of integers $n \le x$ for which $\sum_{i=1}^{n}\frac{1}{\gcd(n,i)}$ takes an integer value. We show that the number of such integers is $\ll x^{2/3+\epsilon}$ for all fixed $\epsilon > 0$.
\end{abstract}
\vskip 10pt
\noindent MSC2020: 11N37, 11N64.

\noindent Keywords: arithmetic function, integer values.

\section{Introduction and notation}

For each integer $n \ge 1$, we consider the arithmetic function
\[
\mathcal{H}(n):=\sum_{i=1}^{n}\frac1{\gcd(n,i)} = \frac{h(n)}{n},
\]
where
\[
h(n) := \prod_{p^{\alpha} \parallel n}\frac{p^{2\alpha+1}+1}{p+1}.
\]
Clearly, $h(n) \le n^{2}$ for $n \ge 1$. Also, the values of $h$ at primes play a special role, so it is useful to define the polynomial
\[
g(z) := z^{2}-z+1,
\]
for which $h(p)=g(p)$ for every prime $p$.

Since $\mathcal{H}(n)$ is always rational, it is natural to study the integers $n$ for which $\mathcal{H}(n)$ is an integer. We denote
\[
\mathcal{S} := \{n\in\mathbb{N}:\ \mathcal{H}(n)\in\mathbb{N}\}, \qquad S(x) := |\mathcal{S} \cap [1,x]| \quad (x \ge 1).
\]

It is easy to verify that every integer $n > 1$ belonging to $\mathcal{S}$ satisfies the following elementary properties:
\begin{enumerate}
  \item $n$ is odd;
  \item $\omega(n) \ge 2$, where $\omega(n)$ is the number of distinct prime factors of $n$;
  \item $P(n) < n^{2/3}$, where $P(n)$ denotes the largest prime factor of $n$.
\end{enumerate}
The first elements of $\mathcal{S}$ are
\begin{align*}
& 1,\ 614341,\ 618233,\ 1854699,\ 11746427,\ 26584019,\ 35239281,\ 79752057,\\
& 85393399,\ 118082503,\ 345592247,\ 354247509,\ 505096361,\ 802597537,\ \dots,
\end{align*}
see \cite{jmdk} and \seqnum{A317480}.

Let $\sigma(n)$ denote the sum of the divisors of $n$. In \cite{ew}, Wirsing showed that, uniformly for all rational numbers $\frac{a}{b}$, the number of integers $n \le x$ satisfying the equation
\[
\frac{\sigma(n)}{n} = \frac{a}{b}
\]
is $\ll \exp\bigl(c_{1} \frac{\log x}{\log \log x}\bigr)$, for some constant $c_{1} > 0$ and all $x \ge 10$. His argument also implies that the function $\frac{\sigma(n)}{n}$ very rarely takes integer values, since the number of possible integer values is only $O(\log\log x)$ by the well-known inequality
\[
\frac{\sigma(n)}{n} \ll \log\log x
\]
that holds uniformly for $n \le x$ when $x \ge 10$.

In our setting, the situation is more delicate, since $\mathcal{H}(n)$ is essentially of the same order of magnitude as $n$. Moreover, it is not clear how one could obtain an upper bound for the number of integers $n \le x$ satisfying $\mathcal{H}(n) = m$ for a fixed integer $m$. We therefore employ a different approach to prove the following theorem.

\begin{thm}\label{thm:1}
For all $x \ge 100$, we have
\[
S(x) \ll x^{2/3}\exp\Bigl(\frac{1}{6}\bigl(1+o(1)\bigr)\frac{\log x\log\log\log x}{\log\log x}\Bigr).
\]
\end{thm}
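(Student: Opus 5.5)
The plan is to split each $n\in\mathcal{S}$ into two coprime parts and exploit the divisibility $n\mid h(n)$. The key structural fact is that $h(p^{\alpha})=\sum_{j=0}^{2\alpha}(-p)^j\equiv 1\pmod p$. Hence for every unitary factorization $n=mr$ with $\gcd(m,r)=1$, the divisibility $n\mid h(m)h(r)$ together with $\gcd(r,h(r))=1$ gives
\[
r\mid h(m).
\]
Here $\gcd(r,h(r))=1$ holds because $h(r)=\prod_{p^\alpha\parallel r}h(p^\alpha)$ and each factor $h(p^\alpha)$ is $\equiv 1$ modulo its own prime. For primes of $r$ dividing a \emph{different} factor $h(q^\beta)$ with $q\mid r$, I would restrict the factorization so that this does not occur, for instance by letting $r$ consist of the prime powers of $n$ whose primes exceed a threshold and arguing that $q^{\beta}$ then cannot absorb them. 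If this restriction turns out to be awkward, I would keep the weaker statement $r\mid h(m)h(r)$ and only use $r\mid h(m)$ up to a factor that is itself controlled by a divisor count. So $n$ is determined by the pair $(m,r)$, where $r$ is a divisor of the integer $h(m)\le m^{2}\le x^{2}$ lying in the range $r\le x/m$.

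The counting step uses a bound for the number of \emph{small} divisors of an integer. For $N\le x^{2}$ and $y\le x$, I would show
\[
\#\{d\mid N:\ d\le y\}\le \exp\Bigl((1+o(1))\frac{\log y\,\log\log\log x}{\log\log x}\Bigr).
\]
To prove it, note that a divisor $d\le y$ of $N$ has at most about $\log y/\log\log x$ prime factors exceeding $\log x$. These prime factors are chosen among the at most $\omega(N)\ll \log x/\log\log x$ primes of $N$. The part of $d$ made of primes $\le \log x$ is handled by a crude smooth-number count. Choosing the large primes gives at most $\omega(N)^{k}$ options with $k\approx \log y/\log\log x$, which produces exactly the factor $\exp(\log y\cdot\log\log\log x/\log\log x)$ shape.

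With this in hand, I fix a parameter $z$ and choose the split $n=mr$ so that $m\le z$. For instance, take $r$ to be the largest unitary divisor of $n$ that is composed of the largest prime powers, with $m=n/r$ just exceeding a threshold. The elementary properties (i) to (iii), especially $P(n)<n^{2/3}$, are used to guarantee that such a split exists with both parts in controlled ranges. Summing the divisor bound over $m\le z$ gives a contribution of
\[
\ll z\exp\Bigl((1+o(1))\frac{\log(x/z)\log\log\log x}{\log\log x}\Bigr).
\]
For the complementary configurations, I would swap the roles and count by $r$ first. Given $r$, the integer $m$ must be a multiple of the part of $m$ forced by $m\mid h(r)h(m)$, and this part is again a divisor of $h(r)$. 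Choosing $z=x^{2/3}$ balances the two counts to $x^{2/3}$ times the subexponential factor. The exponent $1/6$ should come from the fact that the relevant divisors range only up to $x^{1/6}$ once the balancing is done optimally, namely the piece of the cofactor not already determined. I would tune the threshold so that the free part has size at most $x^{1/6}$.

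I expect the main obstacle to be the second regime, where $m$ is large and $r$ is small. There the relation $m\mid h(r)h(m)$ involves $m$ on both sides, so $m$ is not simply a divisor of a known number. I would first try an iterative peeling argument. Remove from $m$ its largest prime power $q^{\beta}$, which must divide $h(r)\,h(m/q^{\beta})$, and repeat. At each step the number of choices is bounded by the number of prime-power divisors of a number $\le x^{2}$, which is $O(\log x)$. With about $\log x/\log\log x$ steps this gives the $\log\log\log x$-type loss, as long as the number of steps is capped by the size of the peeled part, of order $\tfrac16\log x/\log\log x$.
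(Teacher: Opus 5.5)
\textbf{There is a genuine gap: the split $n=mr$ you need is never actually constructed.}

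\emph{The split.} Your argument rests on $\gcd(r,h(r))=1$, and this is false. The congruence $h(p^{\alpha})\equiv 1 \pmod p$ only rules out $p\mid h(p^{\alpha})$. It does not rule out $p\mid h(q^{\beta})$ for another prime $q\mid r$, and such cross-divisibilities are exactly the structure of $\mathcal{S}$. For $614341=7\cdot 13\cdot 43\cdot 157$ one has $43\mid h(7)$, $13\mid h(43)$, $157\mid h(13)$ and $7\mid h(157)$. Your threshold rule with $r=13\cdot 43\cdot 157$, $m=7$ gives $h(m)=43$, so $r\nmid h(m)$. These relations form cycles that can lie entirely among large primes. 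So no rule based only on sizes (a threshold, the largest prime powers, or $P(n)<n^{2/3}$) forces the primes of $r$ to divide $h(m)$. Your fallback, ``$r\mid h(m)$ up to a factor controlled by a divisor count,'' is precisely the unresolved difficulty.

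\emph{The peeling step.} The same cycles break it. The removed $q^{\beta}$ divides $h(r)\,h(m/q^{\beta})$, and $m/q^{\beta}$ contains primes not yet reconstructed. So you are never choosing among divisors of a known number. Even granting the step, $O(\log x)$ choices per step over $\asymp\log x/\log\log x$ steps costs a power of $x$, not a $\log\log\log x$-type factor.

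\emph{The exponent $\frac16$.} Your explanation does not match what happens. In the paper it comes from balancing $X^{1/3}(C\log X/\log\log X)^{k}$ for $k=\omega(d)\le\theta\log X/\log\log X$ against Lemma 3's count $X^{1-\theta}\exp\bigl(2\theta(1+o(1))\frac{\log X\log\log\log X}{\log\log X}\bigr)$, with $\theta=\frac13+\frac12\frac{\log\log\log X}{\log\log X}$.

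\textbf{The missing idea} is a canonical way to organize the divisibilities. Write $n=ab$ with $a$ cubefull and $b$ cubefree. Give each prime $p\mid b$ a single parent $f(p)$: a prime $q\mid b$ with $p\mid h(q^{\beta})$, or $a$ if $p$ divides only $h(a)$. This produces a functional graph.

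The paper then proceeds as follows:
\begin{enumerate}
\item It cuts each cycle at its smallest prime, so the roots have product $<X^{1/3}$. Two-cycles of primes to the first power are handled separately by Proposition 1, via the conic $Kxy=x^{2}-x+1+y^{2}-y$ and Lemma 5.
\item It rebuilds the trees with $O(\log X/\log\log X)$ choices per vertex.
\item It uses Lemma 3 when $\omega(d)$ is large.
\end{enumerate}

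Your split can in fact be rescued from the same graph. The undirected graph on the primes of $b$ with edges $\{p,f(p)\}$ has no more edges than vertices in any subgraph. So it is $2$-degenerate, hence $3$-colorable. Take $r$ to be the color class (with its exponents) of largest product. Then $r\ge b^{1/3}$ and $m=n/r=a\cdot(b/r)$ with $b/r\le(x/a)^{2/3}$. Every prime $p$ of $r$ divides $h(m)$, because $f(p)$ lies in $m$. Given $m$, there are at most $3^{\omega(h(m))}$ choices of $r$. Since $\sum_{a\ \mathrm{cubefull}}a^{-2/3}<\infty$, this gives $S(x)\ll x^{2/3}\exp(O(\log x/\log\log x))$, which implies the theorem.

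Without a structural step of this kind, your proposal does not prove the bound.
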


Finally, we note that the set of integers $n$ such that $n$ divides $\sigma(n^{2})$ has similar properties. The first terms are (\seqnum{A232354})
\begin{align*}
&1,\ 39,\ 793,\ 2379,\ 7137,\ 13167,\ 76921,\ 78507,\ 230763,\ 238887,\ 549549,\\
&692289,\ 863577,\ 1491633,\ 1672209,\ 2076867,\ 4317885,\ 7615179,\ 8329831,\ \dots
\end{align*}
and a corresponding theorem holds.

Throughout the paper, we use constants $c_{1},c_{2},\ldots$ and $C_{1},C_{2},\ldots$ whose values may vary from one occurrence to another. This should not cause any confusion to the reader.

\section{Beginning of the proof of Theorem \ref{thm:1}}\label{proof-intro}

For each $n \in \mathcal{S}$, we consider the factorization $n = ab$, where $a$ is cubefull, $\gcd(a,b) = 1$, and
\[
b = p_{1}^{\alpha_{1}} \cdots p_{\ell}^{\alpha_{\ell}},
\]
where $p_{1} < \cdots < p_{\ell}$ and $\alpha_{i} \le 2$ for $i = 1,\dots,\ell$. Since $n \in \mathcal{S}$, for each $i = 1,\dots,\ell$, we have that $p_{i}$ divides either $h(a)$ or $h(p_{j}^{\alpha_{j}})$ for some $j = 1,\dots,\ell$ with $i \neq j$. If the latter case occurs, we retain only the divisibility relation involving the largest corresponding prime, say $p_{j_{i}}$, and record the pair $(p_{i}, p_{j_{i}})$. Finally, if only the first case occurs, we record the pair $(p_{i}, a)$. We also consider the pair $(a,a)$. Let $\mathcal{C}$ denote the set of these $\ell+1$ pairs.

We associate each prime $p_{i}$ with a vertex $v_{i}$, and likewise associate $a$ with the vertex $v$, and denote this set of $\ell+1$ vertices by $V$. Let $E$ be the set of $\ell+1$ directed pairs obtained from $\mathcal{C}$ via this association, where $(u_{1}, u_{2})$ denotes an edge from $u_{1}$ to $u_{2}$. In this way, we construct a directed graph $G(V,E) = G(n)$.

Let $G_{0}(V_{0}, E_{0}), G_{1}(V_{1}, E_{1}), \dots, G_{t}(V_{t}, E_{t})$ be the list of connected subgraphs of $G$. Exactly one of these graphs contains the vertex $v$, and we will assume it is $G_{0}$.

The graphs $G_{i}$, $i=0,1,\dots,t$, are of a particular type. They are connected {\it functional graphs} since they represent the action of a function $f_{i}(\cdot)$. Lemma \ref{lem:1} tells us that such graphs have a unique cycle and that each trajectory follows a tree that leads to this cycle.

We thus factorize
$$
n = n_{1} n_{2} n_{3}
$$
where $n_{1}$ is the part of $n$ associated with $G_{0}$; it includes the entire cubefull part (which corresponds to the 1-cycle part of $G_{0}$) and the product, with the same powers as in $n$, of the primes associated to $V_{0}$. Similarly, $n_2$ is the product, over all $G_{i}$ whose cycles have length $2$, of the primes associated with these components, with the same powers as in $n$. The integer $n_3$ is defined similarly, using the $G_{i}$ whose cycles have length at least $3$. Clearly, $n_{1}$, $n_{2}$, and $n_{3}$ thus defined are mutually coprime and defined without ambiguity. Also, taking each $n_{i}$ alone recovers the corresponding graphs.

Let $\mathcal{W}_{i}$, for $i=1,2,3$, be the set of integers associated respectively with a 1-cycle graph, 2-cycle graphs, and 3-cycles or longer. We also denote
$$
W_{i}(X) = |\mathcal{W}_{i} \cap [1,X]| \quad (X \ge 1).
$$
In Sections \ref{W_{3}(X)}, \ref{W_{2}(X)}, and \ref{W_{1}(X)} below, we will show that
$$
W_{i}(X) \ll X^{2/3}\exp\Bigl(\frac{1}{6}\bigl(1+o(1)\bigr)\frac{\log X\log\log\log X}{\log\log X}\Bigr) \quad (i=1,2,3)
$$
for $X \ge 100$. We can then write
\begin{eqnarray*}
S(x) & = & \sum_{\substack{n_{1}n_{2}n_{3} \le x \\ n_{1}n_{2}n_{3} \in \mathcal{S} \\ \gcd(n_{i},n_{j})=1}}1\\
& \le & \sum_{\substack{i_{1},i_{2},i_{3} \ge 0 \\ i_{1}+i_{2}+i_{3} \le \frac{\log x}{\log 2}+3}}W_{1}(2^{i_{1}})W_{2}(2^{i_{2}})W_{3}(2^{i_{3}})\\
& \ll & x^{2/3}\exp\Bigl(\frac{1}{6}\bigl(1+o(1)\bigr)\frac{\log x\log\log\log x}{\log\log x}\Bigr)
\end{eqnarray*}
from which the result follows.

It only remains to establish the upper bound for $W_{i}(X)$ ($i=1,2,3$).

\section{Some important lemmas}

\begin{lem}\label{lem:1}
Let $G = (V, E)$ be a finite directed graph such that
\begin{itemize}
    \item $V$ is a finite set of cardinality $n \ge 1$;
    \item there exists a function $f : V \to V$ such that $E = \{ (v, f(v)) :\ v \in V \}$;
    \item the undirected graph obtained by replacing each directed edge $(u, v) \in E$ with the undirected edge $\{u, v\}$ is connected.
\end{itemize}

Then
\begin{enumerate}
    \item for every vertex $v \in V$, the sequence $v, f(v), f(f(v)), \dots$ enters a cycle in at most $n$ steps;
    \item the graph $G$ contains exactly one directed cycle;
    \item each vertex on the cycle is the root of a $($possibly trivial$)$ directed tree whose edges point toward the cycle.
\end{enumerate}
\end{lem}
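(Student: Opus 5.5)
The plan is the standard argument for functional graphs, carried out in three steps matching the three assertions. For part 1, fix $v \in V$ and consider the $n+1$ iterates $v, f(v), \dots, f^{n}(v)$. Since $|V| = n$, the pigeonhole principle gives indices $0 \le i < j \le n$ with $f^{i}(v) = f^{j}(v)$. Taking the smallest such $i$, the vertex $f^{i}(v)$ is periodic, so it lies on the directed cycle $f^{i}(v) \to f^{i+1}(v) \to \cdots \to f^{j}(v) = f^{i}(v)$. This cycle is reached after $i \le n-1 < n$ steps. In particular, at least one directed cycle exists.

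For part 2, I first note that directed cycles in $G$ are exactly the orbits of $f$ restricted to its periodic points. Because each vertex has out-degree exactly one, any two directed cycles that share a vertex coincide, so distinct directed cycles are vertex-disjoint. Uniqueness then follows from counting edges against components. The undirected graph has $n$ vertices and $n$ edges; here a loop $(v,v)$, such as the pair $(a,a)$, is counted as an edge forming a cycle of length one, and two opposite edges $(u,w),(w,u)$ are counted as a doubled edge forming a cycle of length two. A connected multigraph with $n$ vertices and $n$ edges has cyclomatic number $n - n + 1 = 1$, so it contains exactly one undirected cycle. Every directed cycle gives an undirected cycle, and vertex-disjoint directed cycles give distinct undirected cycles. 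Hence there is at most one directed cycle, and combined with part 1, exactly one. This bookkeeping of loops and $2$-cycles as multigraph cycles is the only delicate point, and I would state it explicitly.

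For part 3, let $C$ be the unique cycle. By part 1, every $v \in V$ has a least $k(v) \ge 0$ with $f^{k(v)}(v) \in C$; set $r(v) := f^{k(v)}(v)$. For each $c \in C$, let $T_{c} = \{v : r(v) = c\}$ and take as its edges the pairs $(v, f(v))$ with $v \in T_{c} \setminus \{c\}$. If $v \notin C$, then $f(v)$ satisfies $k(f(v)) = k(v) - 1$ and $r(f(v)) = r(v)$, so these edges stay inside $T_{c}$ and strictly decrease $k$. Consequently $T_{c}$ has no cycles, and every vertex of $T_{c}$ has a unique directed path to $c$. Thus $T_{c}$ is a directed tree rooted at $c$ whose edges point toward the cycle, and it is trivial when $T_{c} = \{c\}$.

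Finally, the sets $T_{c}$ partition $V$, and the edges of $G$ are exactly the tree edges together with the cycle edges. This gives the claimed structure.
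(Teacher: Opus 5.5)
Your proof is correct, but for the substantive step (part 2) it takes a different route from the paper.

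The paper argues directly from the functional structure. Assuming two disjoint directed cycles $C_1, C_2$, it takes an undirected path from $C_1$ to $C_2$ and looks at the last vertex of the path lying in $C_1$. The next edge must point into $C_1$, because a cycle vertex's unique out-edge lies on the cycle. The paper then propagates ``flows into $C_1$'' along the path, so the endpoint $u_2\in C_2$ would flow into $C_1$, a contradiction.

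You instead combine two facts. First, distinct directed cycles are vertex-disjoint; the paper uses this tacitly. Second, a connected graph with $n$ vertices and $n$ edges has cyclomatic number $1$. Your argument is shorter and imports a standard graph-theoretic fact. However, it depends on reading the underlying graph as a multigraph with one edge $\{v,f(v)\}$ per vertex. Read literally as a set of edges, a $2$-cycle $(u,w),(w,u)$ collapses to a single edge, and the count of $n$ edges fails. You rightly flag this, and it matters here: the paper's applications involve both the loop $(a,a)$ and genuine $2$-cycles (the class $\mathcal{W}_2$).

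The paper's path argument is indifferent to that convention. It also shows directly that everything in the component drains into one cycle, which is what part 3 needs.

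In return, you supply details the paper dismisses as immediate: the pigeonhole argument for part 1 and the explicit $k(v), r(v)$ construction of the trees in part 3.
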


\begin{proof}
This is an elementary result about {\it functional graphs}. Point 1 is immediate, and point 3 follows directly from point 2. We may assume that $n \ge 2$, since otherwise the result is trivial.

To prove point 2, suppose for contradiction that $G$ contains at least two disjoint directed cycles, denoted $C_{1}$ and $C_{2}$. Since the underlying undirected graph is connected, there exists an undirected path connecting a vertex $u_{1} \in C_{1}$ to a vertex $u_{2} \in C_{2}$. Let
\[
u_{1} = v_{0},\ v_{1},\ \dots,\ v_{k} = u_{2}
\]
be such a path, where each pair $\{v_{i}, v_{i+1}\}$ corresponds to an edge in the underlying undirected graph.

Let $i$ be the largest index such that $v_{i} \in C_{1}$, and set $u = v_{i} \in C_{1}$, $w = v_{i+1} \notin C_{1}$. Thus, $w$ is connected to $C_{1}$ by an edge. Since the outgoing edge from $u$ belongs to $C_{1}$ (as $u$ lies on a directed cycle), the edge joining $u$ and $w$ cannot be directed from $u$ to $w$. Hence it must be directed from $w$ to $u$, that is, $f(w) = u$. Therefore, the trajectory of $w$ under $f$ enters $C_{1}$.

Applying the same argument successively to $v_{i+1}, v_{i+2}, \dots, v_{k}=u_{2}$, we conclude that the trajectory of $u_{2}$ under $f$ enters $C_{1}$, which contradicts the fact that $u_{2}$ belongs to the disjoint cycle $C_{2}$.

This contradiction proves point 2, and hence also point 3.
\end{proof}

A {\it Dyck word} of length $2n$ is a string consisting of $n$ opening parentheses ``('' and $n$ closing parentheses ``)'', arranged in such a way that:
\begin{enumerate}
    \item At no point in the string do the closing parentheses outnumber the opening parentheses when reading from left to right.
    \item The string contains equally many opening and closing parentheses.
\end{enumerate}
Equivalently, a Dyck word is a well-formed parenthesis string in which each closing parenthesis matches a previously unmatched opening parenthesis.

\begin{lem}\label{lem:2}
The number of Dyck words of length $2n$ is given by the $n$-th Catalan number
\[
C_{n} = \frac{1}{n+1}\binom{2n}{n}.
\]
\end{lem}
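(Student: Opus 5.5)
I would prove this with the classical reflection argument: count all balanced strings, then subtract the bad ones through an explicit bijection. A string of $n$ opening and $n$ closing parentheses is determined by the positions of its opening parentheses. So there are $\binom{2n}{n}$ strings satisfying condition 2, and it remains to count and remove those that violate condition 1. Encode each string as a lattice path from $(0,0)$ to $(2n,0)$ with steps $+1$ for ``('' and $-1$ for ``)''. Condition 1 then says the path never goes below the horizontal axis, that is, it never touches the line $y=-1$.

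For a bad string, I will locate the first position $k$ at which the path reaches height $-1$. I then reflect the remainder of the path (steps $k+1,\dots,2n$) by swapping every ``('' with ``)''. The final height changes from $0$ to $-2$, so the new string has $n-1$ opening and $n+1$ closing parentheses.

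The step that needs care is showing this map is a bijection onto all strings with $n-1$ opening and $n+1$ closing parentheses. Any such string ends at height $-2$, so it must hit $-1$. Reflecting after its first visit to $-1$ gives back a balanced string whose first visit to $-1$ occurs at the same position $k$, since the prefix up to $k$ is untouched. The two operations are therefore mutually inverse. It follows that there are exactly $\binom{2n}{n-1}$ bad strings.

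Subtracting, the number of Dyck words is
\[
\binom{2n}{n}-\binom{2n}{n-1}=\binom{2n}{n}\Bigl(1-\frac{n}{n+1}\Bigr)=\frac{1}{n+1}\binom{2n}{n}=C_{n}.
\]
Here I used $\binom{2n}{n-1}=\frac{n}{n+1}\binom{2n}{n}$. The edge case $n=0$, with the empty word and $C_{0}=1$, holds trivially.
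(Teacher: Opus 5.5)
Your reflection argument is correct and complete, but it takes a genuinely different route from the paper's proof. The paper splits each nonempty Dyck word at its first return to height zero as $(A)B$, with $A\in D_k$ and $B\in D_{n-1-k}$. This gives the convolution recurrence $|D_n|=\sum_{k=0}^{n-1}|D_k|\,|D_{n-1-k}|$ with $|D_0|=1$, and the paper then concludes by induction, saying this is ``the standard recurrence'' for the Catalan numbers. That last step tacitly assumes that $\frac{1}{n+1}\binom{2n}{n}$ satisfies the same recurrence. The paper does not check this, and it is usually proved through the generating-function relation $C(x)=1+xC(x)^{2}$ or through a bijective argument like yours. Your bijection between bad balanced strings and strings with $n-1$ opening and $n+1$ closing parentheses lands directly on the closed form $\binom{2n}{n}-\binom{2n}{n-1}$, so your proof is self-contained. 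The paper's decomposition, in exchange, exposes the recursive structure that parallels the Dyck word/forest correspondence used in the bound for $W_{3}(X)$. That application only needs $C_k\le 4^k/(k+1)$, and your formula gives this immediately via $\binom{2k}{k}\le 4^k$.
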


\begin{proof}
Let $D_{n}$ denote the set of Dyck words of length $2n$. There is only one Dyck word of length $0$ (the empty word), so $|D_{0}|=1$. Every non-empty Dyck word admits a unique decomposition of the form $(A)B$, where
\begin{itemize}
    \item $A \in D_{k}$ is the shortest Dyck word such that $(A)$ is an initial segment of the original word, for some $0 \le k < n$;
    \item $B \in D_{n-1-k}$ is a Dyck word of length $2(n-1-k)$.
\end{itemize}

This yields the recurrence
\[
|D_{n}| = \sum_{k=0}^{n-1} |D_{k}| \cdot |D_{n - 1 - k}| \qquad (n \ge 1).
\]
This is precisely the standard recurrence relation for the Catalan numbers. Since $C_{0} = |D_{0}| = 1$ and both sequences satisfy the same recurrence, the result follows by induction.
\end{proof}

The next lemma is a variant of the upper bound in Théorème 2 of \cite{pe:jln}.

\begin{lem}\label{lem:3}
Let $0 < \delta < 1$ be a fixed real number. Uniformly for $x \ge 100$ and $\delta \le \theta \le 1$, we have
\begin{align*}
V_{\theta}(x) & := |\{n \le x:\ \omega(n) > \frac{\theta\log x}{\log\log x}\}| \\
& \ll x^{1-\theta}\exp\Bigl(2\theta\bigl(1+o_{\delta}(1)\bigr)\frac{\log x\log\log\log x}{\log\log x}\Bigr).
\end{align*}
\end{lem}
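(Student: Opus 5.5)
The plan is Rankin's trick with a weight $z^{\omega(n)}$, followed by an optimization in $z$. Put $k:=\theta\log x/\log\log x$. For any real $z\ge 1$ we have
\[
V_{\theta}(x)\le z^{-k}\sum_{n\le x} z^{\omega(n)} .
\]
The task therefore reduces to a uniform upper bound for $\sum_{n\le x}z^{\omega(n)}$ when $z$ is as large as a power of $\log x$.

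\textbf{Step 1: bound the weighted sum.} Use the trivial inequality $\sum_{n\le x}z^{\omega(n)}\le x\sum_{n\le x}z^{\omega(n)}/n$. Since $z^{\omega}$ is multiplicative, this gives
\[
\sum_{n\le x}z^{\omega(n)}\le x\prod_{p\le x}\Bigl(1+\frac{z}{p-1}\Bigr)\le x\exp\Bigl(\sum_{p\le x}\frac{z}{p-1}\Bigr).
\]
This is too lossy, because it gives $x\exp(z\log\log x)$. Instead, treat small and large primes separately, in the manner of Pomerance/Erd\H{o}s--Nicolas. Fix $y=z\log z$ (or comparable). For $p\le y$ the local factor is $1+z/(p-1)\le 2z/(p-1)$ roughly, so the product over $p\le y$ is about $(Cz)^{\pi(y)}/\prod_{p\le y}p$, which is $\exp\bigl((1+o(1))\,y\log z/\log y\bigr)$, that is $\exp\bigl((1+o(1))z\log z\bigr)$ up to lower order. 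For $y<p\le x$ use $\exp\bigl(z\sum_{y<p\le x}1/p\bigr)=\exp\bigl(z(\log\log x-\log\log y+o(1))\bigr)$. This is still too large for $z$ a power of $\log x$.

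The better route is to bound the count directly rather than via $1/n$. An integer with $\omega(n)>k$ is at least the product of its $k$ smallest prime factors times a cofactor, so $n=m\cdot r$ where $m$ is squarefree with exactly $\lceil k\rceil$ prime factors (the $k$ smallest distinct primes of $n$) and $r\le x/m$. Then
\[
V_\theta(x)\le\sum_{\substack{m\le x\\ \omega(m)=\lceil k\rceil,\ \mu^2(m)=1}}\frac{x}{m}\le x\,\frac{1}{\lceil k\rceil!}\Bigl(\sum_{p\le x}\frac1p\Bigr)^{\lceil k\rceil}.
\]
This gives $x\exp\bigl(-k\log k+k\log\log\log x+O(k)\bigr)$ with $\log k=\log\log x-\log\log\log x+O_\delta(1)$. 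That is
\[
x\exp\Bigl(-\theta\log x+2\theta\,\frac{\log x\log\log\log x}{\log\log x}+O_\delta\Bigl(\frac{\log x}{\log\log x}\Bigr)\Bigr).
\]
The $O_\delta$ term is absorbed into the $o_\delta(1)$ factor because $\log\log\log x\to\infty$. This matches the claimed bound exactly, with the factor $2\theta$ coming from $k\log\log\log x$ in $\log k!$ plus the $(\log\log x)^k$ from the prime sum.

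\textbf{Step 2: the main obstacle.} The delicate point is the multiplicity of the decomposition $n=mr$ and the uniformity in $\theta$. Choosing $m$ as the product of the $\lceil k\rceil$ smallest distinct primes of $n$ makes $m$ unique, so no overcounting occurs. The bound $\sum_{\omega(m)=K}\mu^2(m)/m\le(\sum_p 1/p)^K/K!$ is standard via the multinomial expansion. The remaining care is Stirling, $K!\ge(K/e)^K$, with $K=\lceil k\rceil\asymp_\delta \log x/\log\log x$. Here $\theta\ge\delta$ ensures that $\log K=\log\log x-\log\log\log x+O_\delta(1)$ uniformly, and $\theta\le1$ keeps $K\le\log x$. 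Finally, the error terms $O_\delta(k)$ are $o_\delta\bigl(\theta\log x\log\log\log x/\log\log x\bigr)$ uniformly, which gives the stated form.
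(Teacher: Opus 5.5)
Your final argument is correct and is essentially the paper's proof. The paper uses $1\le x/n$ to get $V_\theta(x)\le\sum_{j\ge k}\frac{x}{j!}\bigl(\sum_{q\le x}\frac1q\bigr)^{j}\ll\frac{x}{k!}(\log\log x+c)^{k}$, with $q$ over prime powers and $k=\lceil\theta\log x/\log\log x\rceil$, and then applies Stirling; your choice of $m$ as the product of the $\lceil k\rceil$ smallest primes of $n$ goes straight to the single term $j=\lceil k\rceil$ with primes only. Incidentally, the Rankin bound you dismissed in Step 1 is not too lossy: taking $z=k/\sum_{p\le x}\frac{1}{p-1}$ in $z^{-k}x\exp\bigl(z\sum_{p\le x}\frac{1}{p-1}\bigr)$ gives exactly $x\bigl(\frac{e}{k}\sum_{p\le x}\frac{1}{p-1}\bigr)^{k}$, the same estimate.
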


\begin{proof}
We may assume that $x$ is sufficiently large. We write $q$ for a prime power. Set $k := \bigl\lceil\frac{\theta\log x}{\log\log x}\bigr\rceil$. Then
\begin{align*}
V_{\theta}(x) & = \sum_{\substack{n \le x \\ \omega(n) \ge k}}1 \le \sum_{\substack{q \mid n \Rightarrow q \le x \\ \omega(n) \ge k}}\frac{x}{n} \le \sum_{j \ge k}\frac{x}{j!}\Bigl(\sum_{q \le x}\frac{1}{q}\Bigr)^{j} \le \sum_{j \ge k}\frac{x}{j!}\bigl(\log\log x+c\bigr)^{j} \\
& \ll \frac{x}{k!}\bigl(\log\log x+c\bigr)^{k} \ll x\Bigl(\frac{e\log\log x+ec}{k}\Bigr)^{k} \\
& \ll x^{1-\theta}\exp\Bigl(2\theta\bigl(1+o_{\delta}(1)\bigr)\frac{\log x\log\log\log x}{\log\log x}\Bigr).
\end{align*}
Here we used Mertens's theorem (see \cite[p.~33]{gt}) together with Stirling's formula. The proof is complete.
\end{proof}

\begin{lem}\label{lem:4}
Let $X \ge 3$. There is a constant $c \ge 1$ such that every integer $n \le X$ satisfies
\[
\omega(n) \le \frac{c\log X}{\log\log X}.
\]
\end{lem}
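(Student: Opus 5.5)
The plan is the standard primorial comparison. Suppose $n \le X$ has $\omega(n) = k \ge 1$. Since the $k$ distinct prime factors of $n$ are each at least as large as the corresponding first $k$ primes, we get
\[
n \ge p_{1}p_{2}\cdots p_{k} \ge k!,
\]
where now $p_{j}$ denotes the $j$-th prime and we use $p_{j} \ge j+1 > j$. Hence $k! \le X$.

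By Stirling's formula (or the elementary bound $k! \ge (k/e)^{k}$), this gives $k(\log k - 1) \le \log X$. It remains to invert this inequality. If $k \le \sqrt{\log X}$ there is nothing to prove, since $\sqrt{\log X} \le c\log X/\log\log X$ for a suitable constant $c$ and all $X \ge 3$. Otherwise $\log k \ge \frac12\log\log X$. For $X$ larger than an absolute constant this gives $\log k - 1 \ge \frac14\log\log X$, and therefore
\[
k \le \frac{4\log X}{\log\log X}.
\]

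The only mildly delicate point is the range of small $X$, where $\log\log X$ is tiny or even negative near $X=3$. It is handled by enlarging $c$. For $3 \le X \le X_{0}$ we have $\omega(n) \le \log X/\log 2$, and $\log\log X$ is bounded above on this range. Since $\log\log 3 > 0$, the right-hand side $c\log X/\log\log X$ is positive and at least a constant multiple of $\log X$, so a large enough $c$ covers this range too. Taking $c$ as the maximum of the constants from the two ranges gives the lemma uniformly for all $X \ge 3$.
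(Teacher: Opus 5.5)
Your proof is correct, and it is more elementary than what the paper has in mind. The paper gives no argument beyond calling the lemma ``a well-known consequence of the prime number theorem.'' The implicit argument there compares $n$ with the primorial: $n \ge p_{1}\cdots p_{k} = e^{\theta(p_{k})}$, and the prime number theorem gives $\theta(p_{k}) = (1+o(1))k\log k$, whence $\omega(n) \le (1+o(1))\log X/\log\log X$.

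You replace the primorial by $k!$. This needs only the trivial fact $p_{j} > j$ together with $k! \ge (k/e)^{k}$, so you use no information about the distribution of primes, not even Chebyshev's bounds. Your case split is also sound:
\begin{itemize}
\item $k \le \sqrt{\log X}$, where $c=1$ already suffices;
\item $k > \sqrt{\log X}$ with $\log\log X \ge 4$, which gives $k \le 4\log X/\log\log X$;
\item the bounded range $3 \le X < e^{e^{4}}$, handled by $\omega(n) \le \log X/\log 2$, where $0 < \log\log 3 \le \log\log X < 4$.
\end{itemize}
Together these give an explicit admissible constant, e.g.\ $c = 4/\log 2$.

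The primorial route yields the optimal leading constant $1+o(1)$. The paper never needs this, since $c$ only enters through factors like $(16c\log X/\log\log X)^{k}$, whose constants are absorbed into the $o(1)$ in the exponent. Even that advantage disappears at first order: $\log k! \sim k\log k \sim \theta(p_{k})$, so inverting $k(\log k - 1) \le \log X$ more carefully than you did already gives $k \le (1+o(1))\log X/\log\log X$.
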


\begin{proof}
This is a well-known consequence of the prime number theorem.
\end{proof}

\begin{lem}\label{lem:5}
Let $|A|, |B|, |C|, |D|, |E|, |F| \le M^{O(1)}$ and assume that $\Delta = B^{2}-4AC$ is not a perfect square (in particular, $\Delta \neq 0$). Then the diophantine equation
\[
Ax^{2}+Bxy+Cy^{2}+Dx+Ey+F = 0
\]
has at most $M^{o(1)}$ solutions in integers $x, y$ with $1 \le |x|, |y| \le M^{O(1)}$.
\end{lem}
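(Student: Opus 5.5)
We reduce the quadratic equation to a norm equation in a real quadratic order (or an imaginary one) and count elements of bounded size with prescribed norm. Multiply the equation by $4A$ (if $A=0$, then $C\neq 0$ unless $\Delta=B^2$ is a square, so after swapping $x$ and $y$ we may assume $A\neq 0$). Completing the square in $x$ gives
\[
(2Ax+By+D)^{2}-\Delta y^{2}+2(2AE-BD)y+4AF-D^{2}=0 .
\]
Next we multiply by $-\Delta$ (recall $\Delta\ne 0$) and complete the square in $y$. This produces an equation
\[
U^{2}-\Delta V^{2}=N,
\]
where $U=\lambda_{1}x+\lambda_{2}y+\lambda_{3}$ and $V=\mu_{1}y+\mu_{2}$ are affine-linear integer forms with coefficients $\le M^{O(1)}$, the map $(x,y)\mapsto (U,V)$ is injective, and $|N|\le M^{O(1)}$. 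Hence it suffices to count integer pairs $(U,V)$ with $|U|,|V|\le M^{O(1)}$ satisfying this equation.

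\emph{Case $N=0$.} Then $U^{2}=\Delta V^{2}$. Since $\Delta$ is not a square, this forces $U=V=0$, which gives at most one solution.

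\emph{Case $N\neq 0$.} Work in the order $\mathbb{Z}[\sqrt{\Delta}]$ and write $\beta=U+V\sqrt{\Delta}$, so that the norm of $\beta$ is $N$. The principal ideal $(\beta)$ of the maximal order $\mathcal{O}_K$, $K=\mathbb{Q}(\sqrt{\Delta})$, has norm $|N|$. The number of ideals of $\mathcal{O}_K$ of norm $|N|$ is at most $\tau(|N|)^{2}\le M^{o(1)}$ by the divisor bound. Thus the $\beta$ split into at most $M^{o(1)}$ classes, two elements lying in the same class exactly when they differ by a unit of $\mathcal{O}_K$. The passage from $\mathbb{Z}[\sqrt{\Delta}]$ to $\mathcal{O}_K$ loses nothing here, since we only need an upper bound and $\mathbb{Z}[\sqrt\Delta]\subset\mathcal{O}_K$.

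For $\Delta<0$ there are at most $6$ units, and we are done.

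For $\Delta>0$ let $\varepsilon>1$ be the fundamental unit; then $\varepsilon\ge (1+\sqrt5)/2$ is bounded below absolutely. Fix one class and two members $\beta$ and $\beta\varepsilon^{k}$. Every $\beta$ has conjugates satisfying $|\beta|,|\beta'|\le M^{O(1)}$, and $|\beta\beta'|=|N|\ge 1$. It follows that $M^{-O(1)}\le|\beta|\le M^{O(1)}$. Therefore $\varepsilon^{|k|}\le M^{O(1)}$, so $|k|\ll\log M$. Each class thus contributes $O(\log M)=M^{o(1)}$ solutions, and altogether there are at most $M^{o(1)}$ solutions.

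The main obstacle is this unit-counting step together with the bookkeeping for the completed-square transformation. One must check that injectivity holds and that the size bounds remain polynomial in $M$; both follow because every multiplier used ($4A$, $-\Delta$) is nonzero and of size $M^{O(1)}$.
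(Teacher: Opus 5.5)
Your argument is correct, and it does more than the paper: the paper gives no proof of Lemma~\ref{lem:5} at all and simply quotes Proposition~1 of \cite{jc:mzg}. Your route is self-contained. You complete the square to reach $U^{2}-\Delta V^{2}=N$ through an injective affine change of variables, and use the non-square hypothesis to dispose of $N=0$. For $N\neq 0$ you bound the count by the number of ideals of $\mathcal{O}_K$ of norm $|N|$ times the number of units compatible with $M^{-O(1)}\le|\beta|\le M^{O(1)}$. This makes the quantitative Remark after the lemma transparent: you get at most $1$, $6\tau(|N|)$, or $O(\tau(|N|)\log M)$ solutions according as $N=0$, $\Delta<0$, or $\Delta>0$. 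Since $|N|\le M^{O(1)}$, this is $\le C_1\exp(C_2\log M/\log\log M)$ with effective constants, and the restriction $|x|,|y|\ge 1$ is never needed. The citation buys only brevity.

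A few cosmetic slips, none affecting validity:
\begin{itemize}
\item The case $A=0$ is vacuous, since $A=0$ forces $\Delta=B^{2}$.
\item After multiplying by $-\Delta$, it is the $y$-only form that plays the role of $U$, namely $(\Delta y-(2AE-BD))^{2}-\Delta(2Ax+By+D)^{2}=(2AE-BD)^{2}+\Delta(4AF-D^{2})$. Your labels of $U$ and $V$ are therefore swapped, which is harmless for injectivity.
\item In the real case the units are $\pm\varepsilon^{k}$, which costs a factor $2$.
\item The number of ideals of norm $|N|$ is in fact at most $\tau(|N|)$, not merely $\tau(|N|)^{2}$.
\end{itemize}
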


\begin{proof}
This is Proposition 1 of \cite{jc:mzg}.
\end{proof}

\begin{rem}
By carefully inspecting the proof of the previous lemma (see \cite{jc:mzg}), one obtains the following more quantitative version. Let $c_{1}$ and $c_{2}$ be two fixed positive constants. Under the same assumption on $\Delta$, if $|A|,|B|,|C|,|D|,|E|,|F| \le M^{c_1}$, then the number of integer solutions satisfying $1 \le |x|, |y| \le M^{c_2}$ is at most
\[
C_1(c_1,c_2)
\exp\left(C_2(c_1,c_2)\frac{\log M}{\log\log M}\right) \qquad (M \ge 10),
\]
where the constants are effectively computable.
\end{rem}

\section{Upper bound for $W_{3}(X)$}\label{W_{3}(X)}

We may assume that $X$ is sufficiently large so that all the expressions and inequalities used throughout the proof are valid. Also, observe that $1 \in \mathcal{W}_{3}$. Let $1 < d \le X$ be an integer in $\mathcal{W}_{3}$ satisfying $\omega(d) = k$. We construct the directed graph $G(V, E)=G(d)$ as explained in Section \ref{proof-intro}. $G$ is a functional graph, but not necessarily connected. It is a collection of connected functional graphs $G_{1},\dots,G_{t}$, each of which satisfies the conditions of Lemma \ref{lem:1}.

Let us isolate one graph $G_{i}(V_{i}, E_{i})$ and imagine it drawn so that the cycle turns clockwise, and the trees are clearly separated from each other, placed near their root, on the outside of the cycle. Let $N_{i}$ be the product of the prime numbers associated with the vertices of $V_{i}$. Since the cycle in $G_{i}$ has length at least 3, there exists a vertex in this cycle associated with a prime $p_{i}$ satisfying $p_{i} < N_{i}^{1/3}$. From this vertex $v_{i}$, corresponding to the smallest $p_{i}$ in the cycle, we define a new graph $G'_{i} (=G'_{i}(V_{i},E'_{i}))$. We obtain $G'_{i}$ by deleting the outgoing edge of $v_{i}$, which necessarily belongs to the cycle. Then, by unfolding $G'_{i}$ so that $v_{i}$ is placed at the bottom and what was the interior of the cycle is now oriented to the left and upward, we obtain a rooted oriented tree with $v_{i}$ as its root.

Let us proceed similarly for each of the graphs $G_{1}(V_{1},E_{1}),\dots,G_{t}(V_{t},E_{t})$, and suppose that the trees $G'_{1}(V_{1},E'_{1}),\dots,G'_{t}(V_{t},E'_{t})$ are drawn side by side, from left to right. We then denote by $G'=G'(V,E')=G'(V,\cup_{i=1}^{t}E'_{i})$ the graph formed by the union of these trees, that is, a forest. We also define the integer $U(d):=p_{1}\cdots p_{t}$. Clearly, $U(d) < d^{1/3} \le X^{1/3}$.

Up to this point, starting from an integer $d \in \mathcal{W}_{3} \cap [1,X]$ such that $\omega(d)=k$, we have constructed a pair $(G'(V,E'),u)$ where $\omega(u)$ corresponds to the number of connected components of $G'$, with $u < X^{1/3}$ and $|V|=k$. Our result is obtained by reasoning in the reverse direction.

First of all, there is a well-known correspondence between Dyck words and forests. In particular, we deduce from Lemma \ref{lem:2} that there are at most $C_{k} \le \frac{4^{k}}{k+1}$ forests $G'(V,E')$ with $|V|=k$ and $t$ connected components.

Next, using the fact that $u < X^{1/3}$ and that we have two choices of exponents for each vertex, we deduce that the number of $t$-tuples $(p_{1}, \dots, p_{t})$ that can be associated with the $t$ roots of $G'$ is bounded by
\begin{eqnarray}\nonumber
t!|\{u \le X^{1/3}:\ \omega(u)\mu(u)^{2} = t\}| & \ll & t!2^{t}X^{1/3}\\ \label{W3:1}
& \ll & \Bigl(\frac{2c \log X}{\log \log X}\Bigr)^{t}X^{1/3}
\end{eqnarray}
for some constant $c$. We used Lemma \ref{lem:4}.

Finally, given $G'$ together with the $t$-tuple $(p_{1},\dots,p_{t})$, one can essentially reconstruct the complete list of prime factors of $d$. Indeed, in $G'_{1}$, the vertices connected to the root are chosen among the $\omega(h(p_{1}))+\omega(h(p_{1}^{2}))$ prime factors of $h(p_{1})$ and $h(p_{1}^{2})$. Then, the vertices connected to these vertices are in turn chosen among the $\omega(h(p))+\omega(h(p^{2}))$ for the corresponding primes $p$, and so on, following the tree structure. Since we have uniformly
$$
\omega(h(p))+\omega(h(p^{2})) \le \frac{2c\log X}{\log \log X} \quad (p \le X),
$$
possibly adjusting the value of the constant $c$, we deduce that the pair $(G'_{1}, p_{1})$ can generate at most
$$
\Bigl(\frac{4c\log X}{\log \log X}\Bigr)^{|E'_{1}|} = \Bigl(\frac{4c\log X}{\log \log X}\Bigr)^{|E_{1}|-1}
$$
possible associations of prime numbers to the vertices of $G'_{1}$. Reasoning similarly for each connected component, we conclude that the pair $(G',(p_{1},\dots,p_{t}))$ yields at most
$$
\Bigl(\frac{4c\log X}{\log \log X}\Bigr)^{k-t}
$$
distinct associations of primes with the vertices of $G'$, that is, at most as many different values for $d$ given the roots.

Putting all this together, and using the fact that $1 \le t \le k/3$, we deduce that the number of integers $d \le X$ belonging to $\mathcal{W}_{3}$ and satisfying $\omega(d) = k$ is bounded by
$$
4^{k}\Bigl(\frac{4c\log X}{\log \log X}\Bigr)^{k}X^{1/3}.
$$

To complete the proof, fix $\theta > 0$ to be chosen later. We have
\begin{eqnarray*}
W_{3}(X) & = & \sum_{\substack{d \le X \\ d \in \mathcal{W}_{3} \\ \omega(d) \le \frac{\theta\log X}{\log\log X}}}1+\sum_{\substack{d \le X \\ d \in \mathcal{W}_{3} \\ \omega(d) > \frac{\theta\log X}{\log\log X}}}1 \ll \sum_{k \le \frac{\theta\log X}{\log\log X}}\Bigl(\frac{16c\log X}{\log \log X}\Bigr)^{k}X^{1/3}+V_{\theta}(X)\\
& \ll & \Bigl(\frac{16c\log X}{\log \log X}\Bigr)^{\frac{\theta\log X}{\log\log X}}X^{1/3}+X^{1-\theta}\exp\Bigl(2\theta\bigl(1+o(1)\bigr)\frac{\log X\log\log\log X}{\log\log X}\Bigr)\\
& \ll & X^{2/3}\exp\Bigl(\frac{1}{6}\bigl(1+o(1)\bigr)\frac{\log X\log\log\log X}{\log\log X}\Bigr)
\end{eqnarray*}
by choosing $\theta := \frac{1}{3}+\frac{1}{2}\frac{\log\log\log X}{\log\log X}$. We used Lemma \ref{lem:3} on the second line.

\section{Upper bound for $W_{2}(X)$}\label{W_{2}(X)}

The analysis of $\mathcal{W}_{2}$ is based on the same idea as that of $\mathcal{W}_{3}$, but relies on a more careful analysis of the cycles. Let $d \in \mathcal{W}_{2} \cap [1,X]$ satisfy $\omega(d) = k \ge 2t$. We distinguish between two cases. In the first case, at least one of the first two primes $p$ associated with the vertices in the cycle satisfies $p^{2} \| d$. The second case is when both primes $p$ and $q$ satisfy $p \| d$ and $q \| d$. Let $G_{1}, \dots, G_{t}$ be the graphs derived from $d$, and suppose that $G_{1}, \dots, G_{s}$ (with $s = 0, \dots, t$) fall into the first case, and $G_{s+1}, \dots, G_{t}$ into the second.

Let $d_{i}$ be the divisor of $d$ associated with the graph $G_{i}$, that is, $p \mid \gcd(d,N_{i})$ and if $p^{\alpha} \| d$ then $p^{\alpha} \| d_{i}$. In $G_{i}$, if we are in the first case, the cycle contains either $(p_{i},q_{i}^{2})$ or $(p_{i}^{2},q_{i}^{2})$. Clearly, we then have $\min(p_{i},q_{i}) < d_{i}^{1/3}$. The following proposition provides the analogous tool for handling the second case.

\begin{prop}\label{prop:1}
Let $z \ge 1$ be a fixed real number. Consider the system
$$
\left\{\begin{array}{l} 
ap = q^{2}-q+1 \\ 
bq = p^{2}-p+1 
\end{array}\right.
$$
where $a,b \ge 1$ are arbitrary integers and $p,q$ are distinct odd prime numbers satisfying $pq \le z$. Then the number of solutions to this system is at most $c_{2}\frac{z^{1/3}}{(\log 2z)^{2}}$, where $c_{2}$ is an explicitly computable constant.
\end{prop}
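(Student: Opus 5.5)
The approach is to discard primality and most of the size information: the system collapses to a single Markov-type equation, which Vieta jumping handles. This should give $O(\log 2z)$ solutions, much stronger than the stated $c_{2} z^{1/3}/(\log 2z)^{2}$.

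\emph{Reduction.} Since $p$ and $q$ are distinct primes, put $N := p^{2}+q^{2}-p-q+1$. Modulo $p$ we have $N \equiv q^{2}-q+1 = ap \equiv 0$, and modulo $q$ we have $N \equiv p^{2}-p+1 = bq \equiv 0$. Hence $pq \mid N$, so $(p,q)$ is a solution in positive integers of
$$
x^{2}+y^{2}-x-y+1 = kxy
$$
for some integer $k \ge 1$. Conversely, every solution of this equation with $\gcd(x,y)=1$ recovers the two divisibilities. It therefore suffices to count coprime positive solutions $(x,y)$ with $xy \le z$, summed over all admissible $k$.

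\emph{Vieta jumping.} Fix $k$ and a solution $(x,y)$ with $x \ge y$. As a quadratic in $x$,
$$
x^{2}-(ky+1)x+(y^{2}-y+1)=0,
$$
so the second root is
$$
x' = ky+1-x = \frac{y^{2}-y+1}{x},
$$
which is a positive integer. When $x > y$ we get $x' \le (y^{2}-y+1)/(y+1) < y$, so $(y,x')$ is a strictly smaller solution. Descending, we reach a minimal solution with $x=y$, or with $x' \ge y$ failing to occur. If $x=y$, the equation reads $(x-1)^{2}=kx^{2}-x^{2}$, which forces $x=1$ and $k=1$. Otherwise we land at $y=1$, where the equation becomes $x^{2}-(k+1)x+1=0$; since the two roots have product $1$, this forces $x=1$ and again $k=1$.

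So only $k=1$ occurs. For $k=1$ the equation is $x^{2}-xy+y^{2}-x-y+1=0$, a positive definite form, so it has only finitely many solutions; indeed completing squares gives $x,y \le 2$. This alone would give $O(1)$ solutions, which looks suspicious. The main step to double-check is therefore the descent: I must verify $x' \ge 1$, verify the strict decrease $x'<y$ for all $x>y \ge 2$, and handle the boundary case $y=1$ carefully.

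\emph{Fallback.} If the descent permits additional minimal seeds (for example $y=2$ or $y=3$), I would list them explicitly; each gives some finite $k$. Each seed then generates a single chain $(x_{j+1},x_{j})$ with $x_{j+1}=kx_{j}+1-x_{j-1}$. For $k \ge 3$ this chain grows geometrically, with ratio $\ge (3+\sqrt5)/2$, so only $O(\log 2z)$ of its terms satisfy $x_{j}x_{j+1} \le z$. Small values $k \le 2$ either give a definite or degenerate conic with $O(1)$ solutions, or a linear-growth family. A linear-growth family would be excluded by the requirement $q \mid p^{2}-p+1$, which forces $q \le p^{2}$; alternatively, Lemma~\ref{lem:5} with $M=z$ applies since the discriminant $k^{2}-4$ is not a square for $k \ge 3$.

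\emph{Conclusion.} In all cases the total count is $O(\log 2z)$, which is $\le c_{2} z^{1/3}/(\log 2z)^{2}$ for a suitable explicit constant $c_{2}$. The main obstacle is carrying out the descent cleanly and enumerating all minimal seeds, hence all admissible $k$, with explicit constants.
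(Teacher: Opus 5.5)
Your argument is correct, and it takes a genuinely different and much sharper route than the paper.

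The paper uses the consequence $q\mid g(a)$, with $a=g(q)/p$, only once. It yields $q<a^{2}$, hence $q<p<q^{3/2}$ after relabelling. This bounds the constant in $Kxy=x^{2}+y^{2}-x-y+1$ by $K<z^{1/5}+1$. Lemma~\ref{lem:5} (a general bound for integer points on conics with non-square discriminant) is then applied for each $K$, giving $z^{1/5+o(1)}$ solutions. Note that $a=g(q)/p$ is precisely your Vieta conjugate $x'$: the paper performs one step of your jump, while you iterate it.

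You can drop both the hedging and the fallback, since your descent is airtight and proves more than you state. If $x>y\ge1$, then $x'=ky+1-x=g(y)/x$ is a positive integer with $x'\le g(y)/(y+1)<y$. So the new pair $(y,x')$ again has strictly unequal coordinates. The descent therefore never reaches $x=y$, and no seeds need to be enumerated. It would be an infinite strictly decreasing sequence of positive integers, which is impossible. Hence there are no positive solutions with $x\neq y$ for any $k$, while $x=y$ forces $(x-1)^{2}=(k-1)x^{2}$, i.e.\ $x=y=1$. So the system has no solutions at all with $p\neq q$. Your ``$O(1)$'' is really $0$, and the proposition holds trivially.

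The two routes buy different things. The paper's needs only a bound on $K$ plus a general conic-counting lemma, so it would survive changes to the system that destroy the symmetric Vieta structure. However, it rests on a nontrivial external input and yields a weaker count. Yours is elementary and self-contained. It also shows that the second case in the analysis of $\mathcal{W}_{2}$ (a $2$-cycle whose two primes both divide $d$ exactly once) is in fact empty.
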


\begin{proof}
We may assume that $z$ is sufficiently large. Let $p$ and $q$ be distinct odd prime numbers, and let $a$ and $b$ be positive integers such that
\begin{equation}\label{prop:1:1}
ap = g(q) \quad \mbox{and} \quad bq = g(p).
\end{equation}
It follows that
\begin{eqnarray*}
a^{2}bq & = & (ap)^{2}-a^{2}p+a^{2}\\
& = & g(q)^{2}-ag(q)+a^{2},
\end{eqnarray*}
from which $q \mid g(a)$. Since $g(a) > 0$, we deduce that $q < a^{2}$, and therefore
$$
p\sqrt{q} < ap = g(q) < q^{2},
$$
which implies $p < q^{3/2}$. Without loss of generality, we have $q < p < q^{3/2}$, and hence $pq < q^{5/2}$. We deduce that $\min(p,q) > (pq)^{2/5}$ and $\max(p,q) < (pq)^{3/5}$. Consequently, we obtain
\begin{equation}\label{prop:1:2}
\frac{p}{q}+\frac{q}{p} < (pq)^{1/5}+1 \le z^{1/5}+1.
\end{equation}
Now, from \eqref{prop:1:1} we deduce that $(x,y) = (p,q)$ is a solution to the diophantine equation
$$
Kxy=x^{2}-x+1+y^{2}-y
$$
for some odd integer $K \ge 1$. Indeed, the right-hand side is a multiple of $pq$. From \eqref{prop:1:2}, we also observe that $K < z^{1/5}+1$. Since $K^{2}-4$ is never a square, Lemma \ref{lem:5} implies that the total number of such solutions $(p,q)$ is $\ll z^{1/5+o(1)} < c_{2}z^{1/3}/(\log 2z)^{2}$, as claimed.
\end{proof}

Suppose that $\prod_{j=1}^{s}d_{j}\le 2^{i}$ and that $d_{j} \le 2^{i_{j}}$ for each $j = s+1, \dots, t$. Reasoning as in \eqref{W3:1}, we obtain the analogous upper bound for the number of root assignments in the first case. Specifically, we use the estimate $\bigl(\frac{2c\log X}{\log\log X}\bigr)^{s} 2^{i/3}$. From Proposition \ref{prop:1}, we find that the number of root assignments is
\begin{eqnarray*}
& \le & \sum_{s=0}^{t}\sum_{\substack{i+i_{s+1}+\dots+i_{t} \\ \le \frac{\log X}{\log 2}+1+t-s}}\Bigl(\frac{2c\log X}{\log \log X}\Bigr)^{s}2^{i/3}\frac{c_{3}2^{i_{s+1}/3}}{(i_{s+1}+1)^{2}}\cdots\frac{c_{3}2^{i_{t}/3}}{(i_{t}+1)^{2}}\\
& \ll & X^{1/3}\sum_{s=0}^{t}\sum_{\substack{i+i_{s+1}+\dots+i_{t} \\ \le \frac{\log X}{\log 2}+1+t-s}}\Bigl(\frac{2c\log X}{\log \log X}\Bigr)^{s}\frac{(2^{1/3}c_{3})^{t-s}}{((i_{s+1}+1)\cdots(i_{t}+1))^{2}}\\
& \ll & X^{1/3}(\log X)\sum_{s=0}^{t}\Bigl(\frac{2c\log X}{\log \log X}\Bigr)^{s}(2^{1/3}(\pi^{2}/6)c_{3})^{t-s}\\
& \ll & X^{1/3}(\log X)\Bigl(\frac{c_{4}\log X}{\log \log X}\Bigr)^{t}
\end{eqnarray*}
for some constant $c_{4}$. This last inequality is the exact analogue of \eqref{W3:1}. From here, we proceed in almost exactly the same way to reach the desired conclusion, namely that
$$
W_{2}(X) \ll X^{2/3}\exp\Bigl(\frac{1}{6}\bigl(1+o(1)\bigr)\frac{\log X\log\log\log X}{\log\log X}\Bigr).
$$

\section{Upper bound for $W_{1}(X)$}\label{W_{1}(X)}

Again, the underlying idea of the proof remains the same. Let $d \in \mathcal{W}_{1} \cap [1,X]$ and suppose that $d = ab$ with $\gcd(a,b) = 1$, where $a$ is cubefull and $b$ is cubefree satisfying $\omega(b) = k$. Let $G_{0}$ be the directed graph associated with $d$ via the process described in Section \ref{proof-intro}. By construction, $G_{0}$ is connected and has a special vertex, say $v$, which is the 1-cycle that makes the graph functional. By Lemma \ref{lem:1}, $G_{0}$ consists of trees that flow into this unique cycle. We define $G'_{0}$ by removing the edge $(v,v)$. Thus, $G'_{0}$ is a rooted directed tree with $v$ as its root.

Once again, given $G'_{0}$ and $a$, we have at most $2\omega(h(a)) \le \frac{2c\log X}{\log\log X}$ choices for each vertex connected to $v$. Moreover, this bound $\frac{2c\log X}{\log\log X}$ is uniform across all other vertices as well. Since there are $\ll X^{1/3}$ possible choices for $a$ and at most $4^{k+1}$ possible choices for $G'_{0}$, we obtain at most
$$
\ll \Bigl(\frac{8c\log X}{\log\log X}\Bigr)^{k}X^{1/3}
$$
choices for such $d$. From this, the final calculation in Section \ref{W_{3}(X)} gives
$$
W_{1}(X) \ll X^{2/3}\exp\Bigl(\frac{1}{6}\bigl(1+o(1)\bigr)\frac{\log X\log\log\log X}{\log\log X}\Bigr).
$$
This completes the proof of Theorem \ref{thm:1}.

\section{A heuristic argument for a lower bound}

For every prime $p>3$, the polynomial $g(x)$ has exactly two roots modulo $p$ if $p \equiv 1 \pmod{6}$, and no roots otherwise. Moreover, $g(x)$ has no root modulo $2$ and a double root modulo $3$.

To construct an element of $\mathcal{S}$ ``on demand'', fix a prime number $p_{1} \equiv 1 \pmod{6}$. Next, consider the factorization $g(p_{1}) = q_{1}^{\alpha_{1}}\cdots q_{j}^{\alpha_{j}}$, and choose one of its prime factors, say $p_{2} = q_{\ell}$, that is neither too small nor necessarily very large.

Observe that $p_{2} \equiv 1 \pmod{6}$. We then consider $g(p_{2})$ and repeat the same procedure, choosing at each step a new prime factor of $g(p_{i})$, until $p_{1}$ reappears as a prime factor, at which point we stop.

In this way, we obtain a sequence of distinct prime numbers $p_{1},\dots,p_{m}$ such that $p_{i+1}\mid g(p_{i})$ for every $i = 1, \dots, m-1$, and $p_{1}\mid g(p_{m})$. It then follows that
\begin{equation}\label{heur_1}
\prod_{i=1}^{m} p_{i} \in \mathcal{S}.
\end{equation}

The obvious question is the following: ``Can one always find such a prime $p_{m}$, and if so, how large is $m$ in terms of $p_{1}$?'' We are unable to answer this question, but at first sight it seems that the random variable $Z \sim \mathrm{Ber}\bigl(\frac{2}{p_{1}-1}\bigr)$ provides a reasonable model for the probability of success of one ``iteration'' from the second term onward. Indeed, there are two favorable residue classes for $p_{i}$ modulo $p_{1}$ among at most $p_{1}-1$ possibilities, since $p_{i} \not\equiv 0 \pmod{p_{1}}$ by construction.

Assuming that this model is valid, let $Z_{i} \sim \mathrm{Ber}\bigl(\frac{2}{p_{1}-1}\bigr)$, $i=1,2,\dots$, be independent random variables. Then the index of the first success follows a geometric distribution with parameter $\frac{2}{p_{1}-1}$. In particular, for any fixed $c > 0$,
\[
\mathbb{P}(m > cp_{1}) = \Bigl(1-\frac{2}{p_{1}-1}\Bigr)^{\lceil cp_{1} \rceil} \sim e^{-2c},
\]
which shows that the natural scale of the waiting time is $p_{1}$. Consequently, for any prescribed confidence level, one can choose explicit constants $0< c_{1}< c_{2}$ such that, for sufficiently large $p_{1}$, the probability that the first success occurs for some
\[
m\in[c_{1}p_{1}, c_{2}p_{1}]
\]
is at least that confidence level. Let us assume that this is the case.

The term ``iteration'' is admittedly somewhat vague, since at each step one must choose a prime factor of $g(p_{i})$. In making these choices, one would like to avoid trajectories that drift too often toward values below, say, $100\sqrt{p_{1}}$.

It seems quite plausible that, for most sufficiently large primes $p_{1}$, there exist at least $\gg c_{3}^{k}$ admissible sequences of length $k$ consisting only of primes in $[M, M^{100}]$, where $M$ is a large fixed parameter. It is then natural to expect that a positive proportion of these suitably chosen sequences eventually return to $p_{1}$, with some
\[
m \in [c_{1}p_{1}, c_{2}p_{1}].
\]
We expect that most of these sequences give rise, via \eqref{heur_1}, to distinct elements of $\mathcal{S}$.

One would thus obtain at least $\gg e^{c_{4}M}$ elements of $\mathcal{S}$. Moreover, these elements would all be bounded by $M^{c_{5}M}$. Setting $x=M^{c_{5}M}$, we find that this heuristic leads to the lower bound
\[
S(x) \gg \exp\Bigl(c_{6}\frac{\log x}{\log\log x}\Bigr)
\]
for some constant $c_{6} > 0$.

Nevertheless, it is still not known whether $\mathcal{S}$ is infinite. Perhaps one could try to prove that the sequence obtained by repeatedly taking a prime factor left over from a known solution never terminates. For example, $614341 = 7 \cdot 13 \cdot 43 \cdot 157 \in \mathcal{S}$, $h(157) = 7 \cdot 3499$; therefore, $2149579159 = 614341 \cdot 3499 \in \mathcal{S}$ and so on.

{\sc Département de mathématiques et de statistique, Université Laval, Pavillon Alexandre-Vachon, 1045 Avenue de la Médecine, Québec, QC G1V 0A6} \\
{\it E-mail address:} {\tt jmdk@mat.ulaval.ca}

{\sc Département de mathématiques et de statistique, Université Laval, Pavillon Alexandre-Vachon, 1045 Avenue de la Médecine, Québec, QC G1V 0A6} \\
{\it E-mail address:} {\tt Patrick.Letendre.1@ulaval.ca}

\end{document}